\theoremstyle{theorem}
\newtheorem{theorem}{Theorem}
\newtheorem{corollary}{Corollary}
\newcommand{\E}{\mathbf{E}}
\newcommand{\R}{\mathbf{R}}
\begin{document}

\title{A Probabilistic Proof of a Wallis-type Formula for the Gamma Function}
\markright{Notes}
\author{Wooyoung Chin}

\maketitle

\begin{abstract}
	We use well-known limit theorems in probability theory to derive a Wallis-type product formula for the gamma function.
	Our result immediately provides a probabilistic proof of Wallis's product formula for $\pi$, as well as the duplication formula for the gamma function.
\end{abstract}

In 1655, Wallis \cite[Prop. 191]{Wal56} wrote down the following beautiful formula for $\pi$:
\begin{equation} \label{eq:Wallis}
	\frac{\pi}{2}
	= \prod_{n=1}^{\infty} \left( \frac{2n}{2n-1} \cdot \frac{2n}{2n+1} \right)
	= \frac{2}{1} \cdot \frac{2}{3} \cdot \frac{4}{3} \cdot \frac{4}{5} \cdot \frac{6}{5} \cdot \frac{6}{7} \cdot \frac{8}{7} \cdots.
\end{equation}
Ever since the formula's discovery, various proofs of  {\it Wallis's product formula} have been found, and each of them has its own merits.
One of the more common proofs of the formula uses a recursion derived from integrating trigonometric functions.
Another proof simply plugs in $x = \pi/2$ into Euler's infinite product formula
\begin{equation}\label{eq:Euler}
	\frac{\sin x}{x} = \prod_{n=1}^{\infty} \left(1 - \frac{x^2}{n^2 \pi^2}\right).
\end{equation}
Although this proof is perhaps the shortest one, proving the above product formula for sine requires some amount of work.

The purpose of this note is to use well-known limit theorems in probability theory to derive a Wallis-type product formula for the gamma function.
A so-called duplication formula for the Gamma function will easily follow from the product formula.
The Gamma function $\Gamma:(0,\infty) \to \R$, which we only define for positive real numbers for simplicity, is given by
\begin{displaymath}
	\Gamma(\alpha) = \int_0^\infty e^{-t} t^{\alpha-1} \,dt.
\end{displaymath}
A direct computation shows $\Gamma(1/2) = \sqrt{\pi}$, and this will let us derive \eqref{eq:Wallis} from a more general product formula for the Gamma function.
By integration by parts, one can easily check that $\Gamma(\alpha) = (\alpha-1)\Gamma(\alpha-1)$ for any $\alpha > 1$.
From this $\Gamma(n) = (n-1)!$ for all $n \in \mathbf{N}$ follows.

The Gamma function is closely related to spheres and spherical coordinates.
For any $n \in \{2,3,4,\ldots\}$, the surface area of the unit sphere
\begin{displaymath}
	S^{n-1} = \{(x_1,\ldots,x_n) \in \R^n \mid x_1^2 + \cdots + x_n^2 = 1\}
\end{displaymath}
embedded in $\R^n$ is $2\pi^{n/2}/\Gamma(n/2)$.
Also, for any continuous $f:[0,\infty) \to \R$ with $\int_0^\infty \lvert f(r) \rvert r^{n-1}\,dr < \infty$, we have
\begin{equation} \label{eq:polar}
	\int_{\R^n} f\left(\sqrt{x_1^2 + \cdots + x_n^2}\right)\,dx_1 \cdots dx_n = \frac{2\pi^{n/2}}{\Gamma(n/2)} \int_0^\infty f(r) r^{n-1}\,dr.
\end{equation}
For more details on the Gamma function, see \cite[p. 58 and Section 2.7]{Fol99}.

If we restrict our interest to just proving \eqref{eq:Wallis}, then there already exist some probabilistic proofs.
A proof by Miller \cite{Mil08} uses the fact that for any $\nu \in \mathbf{N}$, the function $f:\R \to [0,\infty)$ given by
\begin{displaymath}
	f(t) = \frac{\Gamma\left(\frac{\nu+1}{2}\right)}{\sqrt{\pi\nu} \Gamma\left(\frac{\nu}{2}\right)}
	\left(1 + \frac{t^2}{\nu}\right)^{-\frac{\nu+1}{2}}
\end{displaymath}
is a probability density, i.e., it is nonnegative and has a total integral of one.
The distribution with the density $f$ is called Student's $t$-distribution with $\nu$ degrees of freedom.
Another proof by Wei, Li, and Zheng \cite{WLZ17} derives \eqref{eq:Wallis} from a version of the central limit theorem applied to certain familiar discrete random variables.

\begin{theorem} \label{conv}
	If $\alpha > 0$, then
	\begin{equation} \label{conv_even}
		\lim_{k\to\infty}k^{\alpha} \frac{(k-1) (k-2) \cdots 1}{(k-1+\alpha) (k-2+\alpha) \cdots \alpha}
		= \Gamma(\alpha)
	\end{equation}
	and
	\begin{equation} \label{conv_odd}
		\lim_{k\to\infty}\sqrt{\pi} k^{\alpha} \frac{\left(k-\frac{1}{2}\right) \left(k-\frac{3}{2}\right) \cdots \frac{1}{2} }{\left(k-\frac{1}{2}+\alpha\right) \left(k-\frac{3}{2}+\alpha\right) \cdots \left(\frac{1}{2} + \alpha\right)}
		=\Gamma\left(\alpha + \frac{1}{2}\right)
	\end{equation}
	where $k$ ranges over positive integers.
\end{theorem}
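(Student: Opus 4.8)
\emph{Plan of proof.} The plan is to recognize each of the two products as a ratio of values of $\Gamma$ and then to read off the limit from the strong law of large numbers applied to gamma-distributed random variables; in effect this is a probabilistic reading of the Gauss limit formula for $\Gamma$. Peeling factors off the functional equation, $\Gamma(k+\alpha) = \alpha(\alpha+1)\cdots(\alpha+k-1)\,\Gamma(\alpha)$, shows that the left-hand side of \eqref{conv_even} equals $\Gamma(\alpha)\,k^\alpha\Gamma(k)/\Gamma(k+\alpha)$. In the same way the numerator and denominator of the fraction in \eqref{conv_odd} are $\Gamma(k+\tfrac12)/\Gamma(\tfrac12)$ and $\Gamma(k+\tfrac12+\alpha)/\Gamma(\tfrac12+\alpha)$, so using $\Gamma(\tfrac12) = \sqrt{\pi}$ the left-hand side of \eqref{conv_odd} equals $\Gamma(\alpha+\tfrac12)\,k^\alpha\Gamma(k+\tfrac12)/\Gamma(k+\tfrac12+\alpha)$. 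Thus both claims reduce to the single assertion
\[
	\lim_{k\to\infty} \frac{k^\alpha\,\Gamma(k+\beta)}{\Gamma(k+\alpha+\beta)} = 1 \qquad\text{for } \beta \in \{0,\tfrac12\}.
\]

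Next I would attach a probabilistic meaning to $\Gamma(k+\alpha+\beta)/\Gamma(k+\beta)$. Let $E_1, E_2, \ldots$ be independent exponential random variables of mean $1$ and set $S_k = E_1 + \cdots + E_k$, so that $S_k$ has the gamma density $t^{k-1}e^{-t}/\Gamma(k)$ on $(0,\infty)$; integrating $t^\alpha$ against this density gives $\E[S_k^\alpha] = \Gamma(k+\alpha)/\Gamma(k)$, hence $k^\alpha\Gamma(k)/\Gamma(k+\alpha) = 1/\E[(S_k/k)^\alpha]$. For the half-integer case let $N$ be a standard normal random variable independent of the $E_i$; then $\tfrac12 N^2$ has the gamma density with shape parameter $\tfrac12$ (namely $t^{-1/2}e^{-t}/\sqrt{\pi}$ on $(0,\infty)$), so $T_k := S_k + \tfrac12 N^2$ has the gamma density with shape parameter $k+\tfrac12$, and the same computation yields $\E[T_k^\alpha] = \Gamma(k+\tfrac12+\alpha)/\Gamma(k+\tfrac12)$. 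It therefore suffices to show $\E[(S_k/k)^\alpha] \to 1$ and $\E[(T_k/k)^\alpha] \to 1$.

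By the strong law of large numbers $S_k/k \to \E[E_1] = 1$ almost surely, and $T_k/k = S_k/k + N^2/(2k) \to 1$ almost surely as well, so $(S_k/k)^\alpha \to 1$ and $(T_k/k)^\alpha \to 1$ almost surely. To transfer these limits to the expectations I would check uniform integrability: for any $p > 1$ one has $\E[(S_k/k)^{\alpha p}] = \Gamma(k+\alpha p)/\bigl(k^{\alpha p}\Gamma(k)\bigr)$, and for $k \ge 2$, where $\Gamma$ is increasing on $[k,\infty)$, we may bound $\Gamma(k+\alpha p) \le \Gamma(k+m)$ with $m = \lceil \alpha p\rceil$, so the ratio is at most $k(k+1)\cdots(k+m-1)/k^m \le (1 + m/k)^m \le e^{m^2}$, bounded uniformly in $k$ (the value $k=1$ gives a finite number separately). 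The identical estimate applies to $T_k$. Hence $\{(S_k/k)^\alpha\}_k$ and $\{(T_k/k)^\alpha\}_k$ are uniformly integrable, and the Vitali convergence theorem upgrades the almost-sure convergence to $\E[(S_k/k)^\alpha] \to 1$ and $\E[(T_k/k)^\alpha] \to 1$, completing the argument.

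I expect the only delicate point to be this last uniform-integrability estimate, and within it the bookkeeping for non-integer exponents $\alpha p$ and for the first few values of $k$, where $\Gamma$ fails to be monotone; all of this, however, is soft and uses nothing beyond the functional equation for $\Gamma$. Aside from that, the proof is just a single application of the strong law of large numbers together with the reduction in the first paragraph.
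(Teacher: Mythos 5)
Your reduction via the functional equation and the identification of the two products as moments of gamma-distributed variables are both correct, but the uniform-integrability estimate contains a real error as written. After the (valid) bound $\Gamma(k+\alpha p)\le\Gamma(k+m)$ with $m=\lceil\alpha p\rceil$, what you are left with is
\[
	\E\left[\left(\frac{S_k}{k}\right)^{\alpha p}\right]
	\;\le\; \frac{\Gamma(k+m)}{k^{\alpha p}\,\Gamma(k)}
	\;=\; \frac{k(k+1)\cdots(k+m-1)}{k^{\alpha p}},
\]
and since $\alpha p\le m$ this is at least, not at most, $k(k+1)\cdots(k+m-1)/k^m$; when $\alpha p$ is not an integer the quantity on the right grows like $k^{\,m-\alpha p}\to\infty$ (for instance $\alpha p=\tfrac12$, $m=1$ gives the bound $\sqrt{k}$), so the chain does not yield a bound uniform in $k$. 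The repair is one line, and it is in effect what the paper does: do not take an arbitrary $p>1$, but set $p=m/\alpha$ with $m$ an integer exceeding $\alpha$, so that the higher moment you need is an integer moment. Then $\E[(S_k/k)^{\alpha p}]=\E[(S_k/k)^m]=\prod_{j=0}^{m-1}(1+j/k)\le(1+m)^m$ and $\E[(T_k/k)^m]=\prod_{j=0}^{m-1}\bigl(1+(j+\tfrac12)/k\bigr)$ is likewise bounded, the crystal-ball condition holds, and the rest of your argument (almost-sure convergence plus Vitali) goes through unchanged.

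With that repair your proof is correct and its organization is genuinely different from the paper's, even though the underlying probability is the same: your $2S_k$ and $2T_k$ have exactly the distributions of the paper's $X_1^2+\cdots+X_n^2$ for $n=2k$ and $n=2k+1$. The paper proves $\E\bigl[((X_1^2+\cdots+X_n^2)/n)^{\alpha}\bigr]\to1$ by the weak law, the continuous mapping theorem and the same uniform-integrability device, evaluates the moment as $2^{\alpha}\Gamma(\tfrac n2+\alpha)/\Gamma(\tfrac n2)$ via the $n$-dimensional polar-coordinate formula \eqref{eq:polar}, and only at the end specializes to even and odd $n$ and unwinds the Gamma ratios with the functional equation. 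You perform the unwinding first, reducing both \eqref{conv_even} and \eqref{conv_odd} to $k^{\alpha}\Gamma(k+\beta)/\Gamma(k+\alpha+\beta)\to1$ for $\beta\in\{0,\tfrac12\}$, and then realize these ratios as moments of gamma variables built from exponentials (plus half a squared normal for $\beta=\tfrac12$), using the strong law. What your route buys: only one-dimensional integrals are needed (the definition of $\Gamma$ and the gamma density), with no surface-area or polar-coordinate formula in $\R^n$. What it costs: you must know, or prove by convolution, that sums of independent gamma variables are gamma — that fact plays the role of the paper's polar-coordinate computation.
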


\begin{proof}
	Consider a family $X_1, X_2, \ldots$ of independent standard normal random variables.
	The proof is established by investigating the value of
	\begin{displaymath}
		\E \left(X_1^2 + \cdots + X_n^2\right)^{\alpha}
	\end{displaymath}
	in two different ways: the first way uses well-known limit theorems while the second way is purely computational.
	In fact, this value is the moment of order $\alpha$ of a chi-squared distribution.
	However, we won't assume any prior knowledge of chi-squared distributions in this note.
	
	Let us start with the approach using limit theorems.
	By the weak law of large numbers we have
	\begin{displaymath}
		\frac{X_1^2 + \cdots + X_n^2}{n} \to 1 \quad \textrm{in probability}.
	\end{displaymath}
	Applying the continuous function $f(y)=|y|^{\alpha}$ to both sides above and using the continuous mapping theorem \cite[Corollary 6.3.1 (ii)]{Res99}, which tells us that convergence in probability is preserved under continuous maps, we also get
	\begin{displaymath}
		\left(\frac{X_1^2 + \cdots + X_n^2}{n}\right)^{\alpha} \to 1 \quad \textrm{in probability}.
	\end{displaymath}
	
	Note that
	\begin{displaymath}
		\E \left(\frac{X_1^2 + \cdots + X_n^2}{n}\right)^2 = \frac{n \E X_1^4 + n(n-1) (\E X_1^2)^2}{n^2} \le \max\{\E X_1^4, (\E X_1^2)^2\}.
	\end{displaymath}
	Similarly, for any integer $p > \alpha$ we have
	\begin{displaymath}
		\E \left(\frac{X_1^2 + \cdots + X_n^2}{n}\right)^p \le \max\{\E X_1^{2p} , \ldots, (\E X_1^2 )^p\} < \infty.
	\end{displaymath}
	This shows that
	\begin{displaymath}
		\E \left( \left(\frac{X_1^2 + \cdots + X_n^2}{n}\right)^{\alpha} \right)^{p/\alpha}
		= \E \left(\frac{X_1^2 + \cdots + X_n^2}{n}\right)^p
	\end{displaymath}
	is bounded uniformly in $n$, and thus the family
	\begin{displaymath}
		\left\{ \left(\frac{X_1^2 + \cdots + X_n^2}{n}\right)^{\alpha} \right\}_{n = 1}^{\infty}
	\end{displaymath}
	is uniformly integrable. What we used here is sometimes called the ``crystal ball condition"; see \cite[p. 184]{Res99}.
	Since any uniformly integrable sequence of random variables that converges in probability also converges in $L^1$, see \cite[Theorem 6.6.1]{Res99}, we have
	\begin{equation} \label{conv_to_1}
		\lim_{n \to \infty} \E \left(\frac{X_1^2 + \cdots + X_n^2}{n}\right)^{\alpha} = \E 1 = 1.
	\end{equation}
	
	Let us next directly compute $\E (X_1^2 + \cdots + X_n^2)^{\alpha}$ by integration:
	\begin{align*}
		\E (X_1^2 + \cdots + X_n^2)^{\alpha}
		&= \int_{\R^n} (x_1^2 + \cdots + x_n^2)^{\alpha} \cdot \frac{1}{(2\pi)^{n/2}} e^{-(x_1^2 + \cdots + x_n^2)/2} \,dx_1 \cdots dx_n \\
		&= \frac{2 \cdot \pi^{n/2}}{\Gamma(n/2)}\int_0^\infty r^{2\alpha} \cdot \frac{1}{(2\pi)^{n/2}} e^{-r^2/2} \cdot r^{n-1} \, dr \\
		&= \frac{1}{2^{(n/2) - 1} \Gamma(n/2)} \int_0^\infty r^{n+2\alpha-1} e^{-r^2/2} \,dr .
	\end{align*}
	We used \eqref{eq:polar} in the second equality.
	Continuing our calculations, we observe that
	\begin{align*}
		&\frac{1}{2^{(n/2)-1} \Gamma(n/2)} \int_0^\infty r^{n+2\alpha-1} e^{-r^2/2} \,dr \\
		&= \frac{1}{2^{(n/2)-1} \Gamma(n/2)} \int_0^\infty (2u)^{(n/2)+\alpha-1} e^{-u} \,du \\
		&= 2^{\alpha} \cdot \frac{\Gamma\left((n/2)+\alpha\right)}{\Gamma(n/2)}.
	\end{align*}
	
	Finally, we conflate the two approaches.
	By \eqref{conv_to_1} and the previous computation, we have
	\begin{displaymath}
		\lim_{n\to\infty}\left(\frac{n}{2}\right)^{-\alpha} \cdot \frac{\Gamma\left((n/2)+\alpha\right)}{\Gamma(n/2)} =1.
	\end{displaymath}
	Plug in $n = 2k$ and $n=2k+1$. Then, using $\Gamma(x) = (x-1)\Gamma(x-1)$ to expand both the numerator and denominator of the left side, and applying $\lim_{k \to \infty} [(k+\frac{1}{2})/k]^\alpha = 1$, we have
	\begin{displaymath}
		\lim_{k \to \infty} k^{-\alpha} \frac{(k-1+\alpha) (k-2+\alpha) \cdots \alpha  \Gamma(\alpha)}{(k-1) (k-2) \cdots 1}
		= 1
	\end{displaymath}
	and
	\begin{displaymath}
		\lim_{k \to \infty} k^{-\alpha} \frac{\left(k-\frac{1}{2}+\alpha\right) \left(k-\frac{3}{2}+\alpha\right) \cdots \left(\alpha + \frac{1}{2}\right) \Gamma\left(\alpha + \frac{1}{2}\right)}{\left(k-\frac{1}{2}\right) \left(k-\frac{3}{2}\right) \cdots \frac{1}{2} \Gamma\left(\frac{1}{2}\right)}
		= 1.
	\end{displaymath}
	Taking the reciprocal and using $\Gamma(1/2) = \sqrt{\pi}$ concludes the proof.
\end{proof}

In case $\alpha$ is rational, we can estimate $\Gamma(\alpha)$ by a ratio of products of integers.

\begin{corollary} \label{gamma_formula}
	For any positive integers $p$ and $q$, we have
	\begin{displaymath}
		\lim_{k \to \infty} q\cdot \frac{k^{p/q}((k-1)q)((k-2)q) \cdots q}{((k-1)q+p)((k-2)q+p)\cdots p} = \Gamma\left(\frac{p}{q}\right),
	\end{displaymath}
	where $k$ ranges over positive integers.
\end{corollary}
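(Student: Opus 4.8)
The plan is to derive the corollary directly from \eqref{conv_even} by taking $\alpha = p/q$ and clearing denominators, so that the real-number fraction in Theorem~\ref{conv} becomes a ratio of products of integers. Since $p/q > 0$, equation~\eqref{conv_even} gives
\[
\lim_{k\to\infty} k^{p/q}\,\frac{(k-1)(k-2)\cdots 1}{\left(k-1+\frac{p}{q}\right)\left(k-2+\frac{p}{q}\right)\cdots \frac{p}{q}} = \Gamma\!\left(\frac{p}{q}\right),
\]
where the numerator has $k-1$ factors and the denominator has $k$ factors.

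Next I would multiply both the numerator and the denominator of the fraction by $q^k$. In the denominator, each factor $j + \frac{p}{q}$ with $0 \le j \le k-1$ is turned into the integer $jq + p$, so the denominator becomes $((k-1)q+p)((k-2)q+p)\cdots(q+p)\cdot p$. In the numerator, $q^k (k-1)! = q \cdot \bigl(q^{k-1}(k-1)!\bigr) = q\cdot\bigl((k-1)q\bigr)\bigl((k-2)q\bigr)\cdots(2q)\cdot q$, where the single leftover factor of $q$ is precisely the leading ``$q\cdot$'' appearing in the statement. Substituting these expressions back into the displayed limit yields exactly
\[
\lim_{k\to\infty} q\cdot \frac{k^{p/q}\,((k-1)q)((k-2)q)\cdots q}{((k-1)q+p)((k-2)q+p)\cdots p} = \Gamma\!\left(\frac{p}{q}\right),
\]
as claimed.

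Since the argument is nothing more than a rearrangement of a limit already established in Theorem~\ref{conv}, there is no substantive obstacle. The only point requiring a little care is the mismatch in the number of factors --- $k-1$ in the numerator against $k$ in the denominator --- which is what dictates the normalizing power $k^{p/q}$ and forces the extra stray factor of $q$ out front; getting this bookkeeping exactly right is the one place where an index slip could creep in.
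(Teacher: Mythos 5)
Your proposal is correct and matches the paper's proof: both apply \eqref{conv_even} with $\alpha = p/q$ and rewrite the fraction by clearing the denominators (equivalently, multiplying numerator and denominator by $q^k$), with the leftover factor of $q$ accounting for the leading ``$q\cdot$''. The bookkeeping of $k-1$ numerator factors versus $k$ denominator factors is handled correctly.
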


\begin{proof}
	By applying \eqref{conv_even} with $\alpha = p/q$, we obtain
	\begin{multline*}
		\lim_{k \to \infty} q \cdot \frac{k^{p/q}((k-1)q)((k-2)q) \cdots q}{((k-1)q+p)((k-2)q+p)\cdots p} \\
		= \lim_{k \to \infty} k^{p/q} \cdot \frac{(k-1)(k-2) \cdots 1}{\left(k-1+\frac{p}{q} \right) \left(k-2+\frac{p}{q} \right) \cdots \frac{p}{q}}
		= \Gamma\left(\frac{p}{q}\right) .
	\end{multline*}
\end{proof}

The formula for $\Gamma(1/2)$ leads us to Wallis's original formula.

\begin{corollary}[Wallis]
	\begin{displaymath}
		\frac{\pi}{2}
		= \prod_{n=1}^{\infty} \left( \frac{2n}{2n-1} \cdot \frac{2n}{2n+1} \right)
		= \frac{2}{1} \cdot \frac{2}{3} \cdot \frac{4}{3} \cdot \frac{4}{5} \cdot \frac{6}{5} \cdot \frac{6}{7} \cdot \frac{8}{7} \cdots.
	\end{displaymath}
\end{corollary}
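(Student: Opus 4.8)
The plan is to specialize \eqref{conv_even} to $\alpha = 1/2$ — where the right-hand side is $\Gamma(1/2) = \sqrt{\pi}$ — and then to recognize the partial products of Wallis's formula hidden inside the resulting expression after squaring. Concretely, substituting $\alpha = 1/2$ into \eqref{conv_even} gives
\begin{displaymath}
	\lim_{k\to\infty}\sqrt{k}\,\frac{(k-1)(k-2)\cdots 1}{\left(k-\frac12\right)\left(k-\frac32\right)\cdots\frac12} = \Gamma\!\left(\tfrac12\right) = \sqrt{\pi},
\end{displaymath}
and squaring turns this into an assertion about the quantity whose limit we are after.

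Next I would rewrite the two products in terms of double factorials. Multiplying numerator and denominator by $2^k$ converts $\left(k-\frac12\right)\cdots\frac12$ into the odd double factorial $(2k-1)!!$ and $2^k(k-1)!$ into $(2k)!!/k$, so the squared limit reads $\frac1k\bigl((2k)!!/(2k-1)!!\bigr)^2 \to \pi$. The one genuinely useful algebraic step is to split the square asymmetrically, raising the index of one of the two factors by one via $(2k)!! = (2k+1)\cdot(2k)!!/(2k+1)!!$; this yields exactly
\begin{displaymath}
	\left(\frac{(2k)!!}{(2k-1)!!}\right)^2 = (2k+1)\prod_{n=1}^{k}\frac{2n}{2n-1}\cdot\frac{2n}{2n+1},
\end{displaymath}
so that $\frac{2k+1}{k}\prod_{n=1}^k\frac{(2n)^2}{(2n-1)(2n+1)} \to \pi$. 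Since $(2k+1)/k \to 2$, the $k$-th partial product converges to $\pi/2$, and as every factor is positive with nonzero limit, the infinite product converges in the usual sense to $\pi/2$, which is the claim.

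I do not anticipate a real obstacle: the content is entirely in \eqref{conv_even} at $\alpha=1/2$, and the rest is bookkeeping. The only place to be careful is tracking the stray factor $2k+1$ (versus $2k$) when passing between $\prod_{n\le k}(2n-1)$ and $\prod_{n\le k}(2n+1)$, since that is precisely what supplies the $\frac12$. An equivalent route, if one prefers ordinary factorials, is to note that \eqref{conv_even} at $\alpha=1/2$ is exactly the classical estimate $\binom{2k}{k}\sim 4^k/\sqrt{\pi k}$, and that the $N$-th partial product equals $16^N(N!)^4/\bigl((2N+1)((2N)!)^2\bigr) = \bigl(4^N(N!)^2/(2N)!\bigr)^2/(2N+1)$, which then tends to $\pi N/(2N+1) \to \pi/2$.
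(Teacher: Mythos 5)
Your proposal is correct and follows essentially the same route as the paper: specialize \eqref{conv_even} to $\alpha=1/2$, clear the half-integers by multiplying through by $2^k$, and square to recover the Wallis partial products. The paper simply packages the $2^k$ step as Corollary \ref{gamma_formula} with $p=1$, $q=2$ before squaring, so the difference is only bookkeeping (your double-factorial identity with the stray factor $2k+1$ versus the paper's trailing factor $\frac{2k}{2k-1}$).
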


\begin{proof}
	Applying Corollary \ref{gamma_formula} with $p=1$ and $q=2$ gives
	\begin{displaymath}
		\lim_{k \to \infty} 2 \cdot \frac{\sqrt{k} \cdot 2 \cdot 4 \cdots (2k-4) (2k-2)}{1 \cdot 3 \cdots (2k-3) (2k-1)} = \Gamma\left(\frac{1}{2}\right) = \sqrt{\pi}.
	\end{displaymath}
	Dividing both sides by $\sqrt{2}$ and taking the square of both sides, we have
	\begin{displaymath}
		\lim_{k \to \infty} \frac{2}{1} \cdot \frac{2}{3} \cdot \frac{4}{3} \cdot \frac{4}{5} \cdots \frac{2k-2}{2k-3} \cdot \frac{2k-2}{2k-1} \cdot \frac{2k}{2k-1} = \frac{\pi}{2},
	\end{displaymath}
	which implies the desired formula.
\end{proof}

Combining \eqref{conv_even} and \eqref{conv_odd}, we can provide a proof of the following.

\begin{corollary}[duplication formula]
	For any $\alpha > 0$, we have
	\begin{displaymath}
		\Gamma(\alpha) \Gamma\left(\alpha + \frac{1}{2}\right) = 2^{1-2\alpha} \sqrt{\pi} \Gamma(2\alpha).
	\end{displaymath}
\end{corollary}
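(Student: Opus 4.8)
The plan is to feed both factors on the left-hand side through Theorem \ref{conv}: write $\Gamma(\alpha)$ using \eqref{conv_even}, write $\Gamma(\alpha+\frac12)$ using \eqref{conv_odd}, multiply the two limits (legitimate, since the product of two convergent sequences converges to the product of their limits), and then recognize what comes out as exactly the limit in \eqref{conv_even} with $\alpha$ replaced by $2\alpha$ and $k$ replaced by $2k$. Concretely, the product becomes
\[
\Gamma(\alpha)\,\Gamma\!\left(\alpha+\tfrac12\right)
= \lim_{k\to\infty} \sqrt{\pi}\, k^{2\alpha}\,
\frac{\bigl[(k-1)(k-2)\cdots 1\bigr]\,\bigl[(k-\tfrac12)(k-\tfrac32)\cdots\tfrac12\bigr]}
{\bigl[(k-1+\alpha)\cdots\alpha\bigr]\,\bigl[(k-\tfrac12+\alpha)\cdots(\tfrac12+\alpha)\bigr]},
\]
so everything reduces to simplifying this single ratio.

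Next I would merge the ``integer'' and ``half-integer'' factors in the numerator and in the denominator. Pulling a factor $\tfrac12$ out of each half-integer term turns $(k-\tfrac12)(k-\tfrac32)\cdots\tfrac12$ into $2^{-k}(2k-1)(2k-3)\cdots 1$, and combining this with $(k-1)(k-2)\cdots 1=(k-1)!$ a short factorial computation should give that the numerator equals $2\cdot 4^{-k}(2k-1)!$, i.e.\ it is, up to the explicit constant $2\cdot 4^{-k}$, the product $(2k-1)(2k-2)\cdots 1$. Doing the same on the denominator, a factor $\tfrac12$ comes out of every term and the even-shift product $(k-1+\alpha)\cdots\alpha$ and the odd-shift product $(k-\tfrac12+\alpha)\cdots(\tfrac12+\alpha)$ interleave into
\[
\bigl[(k-1+\alpha)\cdots\alpha\bigr]\,\bigl[(k-\tfrac12+\alpha)\cdots(\tfrac12+\alpha)\bigr]
= 4^{-k}\,(2k-1+2\alpha)(2k-2+2\alpha)\cdots(2\alpha).
\]
The two factors of $4^{-k}$ cancel, leaving inside the limit the quantity $2\sqrt{\pi}\, k^{2\alpha}\,\dfrac{(2k-1)(2k-2)\cdots 1}{(2k-1+2\alpha)(2k-2+2\alpha)\cdots 2\alpha}$.

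To finish, I would write $k^{2\alpha}=2^{-2\alpha}(2k)^{2\alpha}$ and let $k\to\infty$; then $2k$ runs through the even positive integers, and along that subsequence the limit asserted in \eqref{conv_even} with $\alpha$ replaced by $2\alpha$ still holds, so $(2k)^{2\alpha}\,\dfrac{(2k-1)\cdots 1}{(2k-1+2\alpha)\cdots 2\alpha}\to\Gamma(2\alpha)$. Collecting the surviving constants $2$, $\sqrt{\pi}$, and $2^{-2\alpha}$ gives $\Gamma(\alpha)\Gamma(\alpha+\tfrac12)=2^{1-2\alpha}\sqrt{\pi}\,\Gamma(2\alpha)$.

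The main obstacle I anticipate is purely the bookkeeping in the merging step: \eqref{conv_even} supplies $k-1$ numerator factors while \eqref{conv_odd} supplies $k$, whereas the target expression \eqref{conv_even} at $2\alpha$ (with $2k$ in place of $k$) has $2k-1$ numerator factors, so one must carefully track an off-by-one in the number of factors alongside the accumulated powers of $2$. Pinning down the single leftover constant as exactly $2$ (and not $1$ or $4$) is where a careless calculation would slip; the rest is routine algebra plus one appeal to \eqref{conv_even}.
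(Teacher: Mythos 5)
Your proposal is correct and follows essentially the same route as the paper: multiply \eqref{conv_even} and \eqref{conv_odd}, rescale the integer and half-integer factors by powers of $2$ (the paper multiplies numerator and denominator by $2^{2k}$, which is the same bookkeeping you do by extracting $2\cdot 4^{-k}$ and $4^{-k}$), and recognize \eqref{conv_even} with $2\alpha$ and $2k$ along the even subsequence. The leftover constant $2$ and the factor $2^{-2\alpha}$ from $k^{2\alpha}=2^{-2\alpha}(2k)^{2\alpha}$ are handled exactly as in the paper, so nothing is missing.
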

\begin{proof}
	Multiplying \eqref{conv_even} and \eqref{conv_odd}, we have
	\begin{displaymath}
		\sqrt{\pi}k^{2\alpha} \frac{\left(k-\frac{1}{2}\right)(k-1)\cdots1\cdot\frac{1}{2}}{\left(k-\frac{1}{2}+\alpha\right)(k-1+\alpha)\cdots\left(\alpha+\frac{1}{2}\right)\cdot\alpha}
		\to \Gamma(\alpha)\Gamma\left(\alpha+\frac{1}{2}\right).
	\end{displaymath}
	By multiplying $2^{2k}$ to both the numerator and the denominator, we have
	\begin{displaymath}
		2^{1-2\alpha}\sqrt{\pi}(2k)^{2\alpha}\frac{(2k-1)(2k-2)\cdots 1}{(2k-1+2\alpha)(2k-2+2\alpha) \cdots (2\alpha)}
		\to \Gamma(\alpha)\Gamma\left(\alpha+\frac{1}{2}\right).
	\end{displaymath}
	By noticing that the previous formula contains \eqref{conv_even} with $\alpha$ and $k$ replaced by $2\alpha$ and $2k$, we obtain the desired formula.
\end{proof}

\end{document}